\makeatletter \@addtoreset{equation}{section} \makeatother
\newtheorem{theorem}{Theorem}[section]
\newtheorem{proposition}{Proposition}[section]
\newtheorem{lemma}{Lemma}[section]
\newtheorem{remark}{Remark}[section]
\newcommand{\B}{\Bigr}
\begin{document}

\title{\sc On the Schr\"{o}dinger-Poisson system \\ with $(p,q)$--Laplacian }
\author{\sc {Yueqiang Song$^{a}$, Yuanyuan Huo$^{a}$, and Du\v{s}an D.
Repov\v{s}$^{b,c,d}$\thanks{{ { Corresponding author } }}
\thanks{{{E-mail:
songyq16@mails.jlu.edu.cn (Song), 
hyy13353223575@163.com (Huo), 
dusan.repovs@guest.arnes.si (Repov\v{s})} }}}\\[1mm]
$^{\small\mbox{a}}${\small College of Mathematics, Changchun Normal
University,   Changchun, 130032,  P.R. China}\\[-2mm]
$^{\small\mbox{b}}$ {\small Faculty of Education, University of Ljubljana, Ljubljana, 1000,
Slovenia}\\[-2mm]
$^{\small\mbox{c}}$ {\small Faculty of
Mathematics and Physics, University of Ljubljana, Ljubljana, 1000,
Slovenia}\\[-2mm]
$^{\small\mbox{d}}$ {\small Institute of Mathematics, Physics and Mechanics, Ljubljana, 1000,
Slovenia}
}
\date{}
\maketitle

\begin{abstract}
We  study a class of  Schr\"{o}dinger-Poisson systems
with $(p,q)$--Laplacian. Using fixed point theory, we obtain a  new
existence result for nontrivial solutions. The main novelty of the
paper is the combination of a double phase operator and the
nonlocal term. Our results generalize some known results.
\end{abstract}

{\it Keywords:} Double phase operator; Schr\"{o}dinger-Poisson
systems; $(p,q)$--Laplacian; Fixed point theory.\\[-3mm]

\emph{\it Math. Subj. Classif. (2020):} 35J47; 35J60; 35R11.

\section{ Introduction }
In this article, we shall study the following
Schr\"{o}dinger-Poisson system with $(p,q)$--Laplacian
\begin{eqnarray}\label{e1.1}
\left\{\begin{array}{l}
-\Delta_{p}u-\Delta_{q}u+(|u|^{p-2}+|u|^{q-2})u -\phi|u|^{q-2}u=
h(x,u) + \lambda g(x)    \quad \mbox{in }\,\,\mathbb{R}^3,\\
-\Delta\phi = |u|^q \quad \mbox{in }\,\,\mathbb{R}^3,
\end{array}\right.
\end{eqnarray}
where $\Delta_{\varsigma}=$div$(|\nabla u|^{p-2}\nabla u)$  is the
$\varsigma$-Laplacian ($\varsigma = p, q$), $\frac{3}{4} < p < q <
3$,  $\lambda$ is a positive parameter, the nonnegative function $g
\in L^{\frac{3q}{4q-3}}(\mathbb{R}^3)$ is a perturbation term, and
$g(x) \not\equiv 0$. Here, $h: \mathbb{R}^3 \times \mathbb{R}\to
\mathbb{R}$ is a Carath\'{e}odory function and it satisfies certain
assumptions.

Our study of problem \eqref{e1.1} was motivated by two main reasons.
On the one hand, when $p=q=2$ and $\lambda \equiv 0$, problem
\eqref{e1.1} becomes the following nonlinear Schr\"{o}dinger-Poisson
system
\begin{eqnarray}\label{e1.2}
\left\{\begin{array}{l}
-\Delta u+ u -\phi u = h(x,u) \quad \mbox{in }\,\,\mathbb{R}^3,    \\
-\Delta\phi = |u|^2 \quad \mbox{in }\,\,\mathbb{R}^3.
\end{array}\right.
\end{eqnarray}
System \eqref{e1.2} depicts how charged particles interact
with the motion  electromagnetic field. While the nonlocal term
$\phi u$ describes interactions with the electric field, the
nonlinear term models interactions between the particles. By virtue
of its strong physical background, system \eqref{e1.2} has
drawn wide attention in recent decades. For $p=q\neq2$, system
\eqref{e1.1} was studied for the first time  by Du et al. \cite{du1} 
and the existence of nontrivial solutions of the system was obtained
by invoking the Mountain Pass Theorem. For the quasilinear
Schr\"{o}dinger-Poisson system, we refer to Du et al. \cite{du2}.
Readers interested in learning more about the results for the
Schr\"{o}dinger-Poisson system using the variational methods are
referred to Ambrosetti-Ruiz \cite{am}, D'Aprile-Mugnai \cite{da},
Ruiz \cite{ru} and the references therein.

On the other hand,  when $p\neq q$, problem \eqref{e1.1} is driven
by a differential operator with unbalanced growth. When problem
\eqref{e1.1} without the nonlocal term $\phi u$  becomes a $p\&
q$-Laplacian equation
\begin{eqnarray}\label{e1.3}
-\Delta_{p}u-\Delta_{q}u+(|u|^{p-2}+|u|^{q-2})u -\phi|u|^{q-2}u=
h(x,u)  \quad \mbox{in }\,\,\mathbb{R}^N,
\end{eqnarray}
this problem has a rich physical background, since the double phase
operator has been applied to describe steady-state solutions of
reaction diffusion problems in biophysics, plasma physics, and
chemical reaction analysis. Using the variational methods, some
results for problem \eqref{e1.3} can be found in Bartolo et al.
\cite{ba}, Figueiredo \cite{fi}, Papageorgiou et al. \cite{pa},
 and
the references therein.

Inspired by the above references,  we prove in this paper the
existence  of nontrivial solutions for problem \eqref{e1.1} by using
 fixed point theory. Although some authors have
already used fixed point
theory, see  Carl-Heikkil\"{a} \cite{ca},  de Souza \cite{so} and
Tao-Zhang \cite{t1, t2},  as far as we know, problem \eqref{e1.1}
has not been studied before. Because of the occurence of a
nonhomogeneous term, we can  prove that a weak solution to problem
\eqref{e1.1} exists by the fixed point theory. The results in this
paper can be regarded as an extension of results in Du et al.
\cite{du1, du2} and Tao-Zhang \cite{t1, t2}.
In some sense, our
results are new, even in the $p = q$ case.

Our existence result, which is the main result of this paper, can be
stated as follows.
\begin{theorem}\label{T1.1}
Assume that 
$h(x,u): \mathbb{R}^3 \times \mathbb{R}^+\to
\mathbb{R}^+:=[0, +\infty)$ is a nondecreasing function in $u$, and
$h(x,u) =0$ when $u < 0$. Moreover,
 assume that it satisfies the following
condition
\begin{equation}\label{e1.4}
|h(x,t)|\leq d_1(x)|t|^{\tau-1}+d_2(x)|t|^{q^\ast-1},  \ \text{for\
all}\ (x, t) \in \mathbb{R}^3 \times \mathbb{R},\end{equation} where
$q \leq \tau < q^\ast := \frac{3p}{3-p}$, $0 \leq d_1 \in
L^\eta(\mathbb{R}^3)$, $0 \leq d_2 \in L^\infty(\mathbb{R}^3)$, and
$\eta = \frac{6}{6-\tau}$. Then there exists $\lambda_{0}>0$ such
that for every $0<\lambda\leqslant\lambda_{0}$, problem \eqref{e1.1}
has a 
positive solution.
\end{theorem}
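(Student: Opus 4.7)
The plan is to recast \eqref{e1.1} as a fixed-point equation $u=T(u)$ on a closed convex subset of the intersection space $W:=W^{1,p}(\mathbb{R}^3)\cap W^{1,q}(\mathbb{R}^3)$ (equipped with $\|u\|:=\|u\|_{W^{1,p}}+\|u\|_{W^{1,q}}$) and apply a fixed-point theorem. The continuous embeddings $W\hookrightarrow L^r(\mathbb{R}^3)$ provided by the Sobolev inequalities will be used throughout. For every $v\in W$, $|v|^q\in L^{6/5}(\mathbb{R}^3)$ (since $q>1/2$ ensures $q\le 6q/5\le 3q/(3-q)$), so that the Poisson equation $-\Delta\phi=|v|^q$ has a unique nonnegative solution $\phi_v\in D^{1,2}(\mathbb{R}^3)$, namely the Riesz potential $\phi_v(x)=\frac{1}{4\pi}\int_{\mathbb{R}^3}\frac{|v(y)|^q}{|x-y|}\,dy$, and the Hardy--Littlewood--Sobolev inequality yields the key bound $\int\phi_v|v|^q\,dx\le C\|v\|^{2q}$.

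Next, define $T(v):=u$ as the unique weak solution of
\begin{equation*}
Lu:=-\Delta_p u-\Delta_q u+|u|^{p-2}u+|u|^{q-2}u=\phi_v|v|^{q-2}v+h(x,v)+\lambda g,
\end{equation*}
which exists and is unique by Browder--Minty, as $L:W\to W^*$ is strictly monotone, coercive, and hemicontinuous. The main task is to show that, for a suitable $R>0$ and every sufficiently small $\lambda$, $T$ maps the closed convex set $K_R:=\{v\in W:v\ge 0\text{ a.e.},\ \|v\|\le R\}$ into itself. Positivity of $u=T(v)$ follows from testing against $-u^-$: the assumptions on $h$ (nondecreasing in $u$, vanishing on negatives, hence $h(x,v)\ge 0$ for $v\ge 0$), together with $\phi_v\ge 0$ and $g\ge 0$, render the right-hand side nonnegative, forcing $u^-\equiv 0$. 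For the norm bound, testing with $u$ and combining the HLS estimate with H\"older's inequality applied to \eqref{e1.4} (using $d_1\in L^\eta$, $d_2\in L^\infty$, and the Sobolev embeddings) and to $\int gu\,dx$ (using that $g\in L^{3q/(4q-3)}$ is dual to $L^{3q/(3-q)}$), one derives
\begin{equation*}
\min(\|u\|^p,\|u\|^q)\le C_1\|v\|^{2q}+C_2\|v\|^\tau+C_3\|v\|^{q^*}+C_4\lambda.
\end{equation*}
Since $2q$, $\tau$, $q^*$ all strictly exceed $p$, for small $R$ the first three terms are dominated by $\tfrac12 R^p$ on $K_R$, and choosing $\lambda_0$ so small that $C_4\lambda_0\le\tfrac12 R^p$ delivers $T(K_R)\subset K_R$.

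Continuity of $T$ on $K_R$ is routine, stemming from the continuity of the Nemytskii operators $v\mapsto\phi_v|v|^{q-2}v$ and $v\mapsto h(\cdot,v)$ combined with the continuity of $L^{-1}:W^*\to W$ afforded by strict monotonicity; a fixed-point theorem (Schauder after checking compactness of $T(K_R)$, or Banach after exploiting the smallness of $R$ and $\lambda_0$ to deduce a contraction from differences $T(v_1)-T(v_2)$) then yields a fixed point $u\in K_R$, necessarily nontrivial since $\lambda g\not\equiv 0$. The principal obstacle is the invariance step: the nonlocal term $\int\phi_v|v|^{q-2}v\,u\,dx$ requires a sharp HLS-plus-Sobolev estimate, and one has to verify that the exponents $2q$, $\tau$, $q^*$ all dominate $p$ strictly so that the smallness-in-$R$ argument closes. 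The secondary difficulty is the compactness (or contraction) step on the unbounded domain $\mathbb{R}^3$, where Sobolev embeddings are only continuous; this typically forces one to exploit the smoothing effect of $L^{-1}$ and the Riesz representation of $\phi_v$, or alternatively to argue via Banach using the joint smallness of $R$ and $\lambda_0$.
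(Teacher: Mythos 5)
Your reformulation and your ball--invariance estimate coincide with the paper's: the paper defines $\mathcal{B}u$ as the principal part, inverts it by Minty--Browder, sets $\mathcal{G}=\mathcal{B}^{-1}\circ\mathcal{T}$ with $\mathcal{T}$ carrying the nonlocal term, $h$, and $\lambda g$ (evaluated at $u^{+}$), and derives exactly your alternative $\langle\mathcal{B}v,v\rangle\geq 2^{-p}\|v\|^{p}$ or $\geq 2^{1-q}\|v\|^{q}$ to close the smallness argument, using that $2q$, $\tau$, $q^{\ast}$ all exceed $p$. Up to that point your proposal is sound.

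The genuine gap is the concluding fixed--point step. You leave open whether to use Schauder or Banach, and neither works under the stated hypotheses. Schauder requires (relative) compactness of $T(K_R)$, which fails on the unbounded domain $\mathbb{R}^{3}$ because the Sobolev embeddings of $W$ are only continuous, not compact; the ``smoothing effect of $L^{-1}$'' does not restore compactness, since the obstruction is translation invariance at infinity, not lack of regularity. Banach requires $T$ to be a contraction, but $h$ is only a Carath\'{e}odory function satisfying a growth bound \eqref{e1.4} --- no Lipschitz or H\"{o}lder continuity in $t$ is assumed --- and for $p\neq 2$ the inverse of the monotone operator $L$ does not come with a usable Lipschitz constant, so no contraction estimate for $T(v_1)-T(v_2)$ is available. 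The paper avoids both difficulties by invoking the order--theoretic fixed point theorem of Carl--Heikkil\"{a} (Proposition \ref{pro2.1}): every closed ball of a reflexive Banach semilattice has the fixed point property for \emph{increasing} maps, with no continuity or compactness required. This is precisely where the hypothesis that $h(x,\cdot)$ is nondecreasing enters: it makes $\mathcal{T}$ (hence $\mathcal{G}=\mathcal{B}^{-1}\circ\mathcal{T}$, since $\mathcal{B}^{-1}$ is order--preserving) increasing for the partial order induced by the cone of nonnegative functions. In your write--up that monotonicity assumption is used only to get nonnegativity of the right--hand side, so the structural ingredient that actually closes the proof is missing. To repair your argument you would need either to supply this order--theoretic step, or to add hypotheses (Lipschitz continuity of $h$, or some compactness-restoring structure such as radial symmetry or decay of $d_1,d_2,g$) that the theorem does not assume.
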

\begin{remark}\label{rmk1.1} We point out that
there are many functions that satisfy the assumptions of Theorem
\ref{T1.1}. For example, we can take
$h(x,t)=\frac{1}{1+x^2}|t|^{q^\ast-1}$.\end{remark}

\section{Preliminaries}\label{sec preliminaries}
In this section, we shall present  some preliminary results, as well
as some notations and useful results. To this end, let $W$ be the
subspace of $W^{1,p}(\mathbb{R}^3)$ and $W^{1,q}(\mathbb{R}^3)$,
defined by $ W = W^{1,p}(\mathbb{R}^3) \bigcap
W^{1,q}(\mathbb{R}^3)$,  with respect to the norm $\Vert u \Vert =
\Vert u \Vert_{W^{1,p}(\mathbb{R}^3)} + \Vert u
\Vert_{W^{1,q}(\mathbb{R}^3)}$. Since $W^{1,r}(\mathbb{R}^3)$, with
$1 < r < \infty$, is a separable reflexive Banach space, we deduce
that $W$ is a separable reflexive Banach space. Moreover,  we also
know that the embeddings $W\hookrightarrow L^{p}(\mathbb{R}^3),
L^{q}(\mathbb{R}^3)$ are continuous.
On the other hand, according to Du et al. \cite{du2}, for any given
$u\in W^{1,q}(\mathbb{R}^{3})$, there exists a unique
\[\phi_{u}(x)=\frac{1}{4\pi}\int_{\mathbb{R}^{3}}\frac{|u(y)|^{q}}{|x-y|}\mathrm{d}y,\quad \phi_{u}\in D^{1,2}(\mathbb{R}^{3}),\]
satisfying $ -\Delta\phi_{u}=|u|^{q}$.  

We now summarize the
properties of $\phi_{u}$  which will be used later.
\newtheorem{lem}{Lemma}[section]
\begin{lem} (Du et al. \cite{du2}\label{lemma2.1})
    Let $u\in W^{1,q}(\mathbb{R}^{3})$. Then the following properties hold:\\
    $(1)\ \phi_{u}\geqslant 0,$ for all $x\in\mathbb{R}^{3};$\\
    $(2)$ For any $t\in \mathbb{R}^{+},\phi_{tu}=t^{q}\phi_{u}$, and $\phi_{u_{t}}t^{kq-2}\phi_{u}(tx)$ with $u_{t}(x)=t^{k}u(tx);$\\
    $(3)\;\|\phi_{u}\|_{D^{1,2}}\leqslant C\|u\|^{q}$, where $C$ is independent of $u;$\\
    $(4)$\;If $u_{n}\rightharpoonup u$ in $W^{1,q}(\mathbb{R}^{3})$, then $\phi_{u_{n}}\rightharpoonup \phi_{u}$ in $D^{1,2}(\mathbb{R}^{3})$, and
$\int_{\mathbb{R}^{3}}\phi_{u_{n}}|u_{n}|^{q-2}u_{n}\varphi
\mathrm{d}x\to \int_{\mathbb{R}^{3}}\phi_{u}|u|^{q-2}u\varphi
\mathrm{d}x$, for all $\varphi\in W^{1,q}(\mathbb{R}^{3})$.
\end{lem}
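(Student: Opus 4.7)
The plan is to establish the four items sequentially, using both the explicit representation
$\phi_u(x)=\frac{1}{4\pi}\int_{\R^3}\frac{|u(y)|^q}{|x-y|}\,dy$
and the characterization of $\phi_u$ as the unique $D^{1,2}(\R^3)$-weak solution of $-\Delta\phi=|u|^q$. Items (1) and (2) are direct consequences of the representation. Nonnegativity in (1) follows because both factors in the integrand are nonnegative. For (2), the identity $\phi_{tu}=t^q\phi_u$ is obtained by pulling $t^q$ out of $|tu(y)|^q$, while the identity for $\phi_{u_t}$ is a change of variables $z=ty$, which produces a factor $t^{kq-3}$ from $|u(ty)|^q\,dy$ and a factor $t$ from $|x-y|^{-1}=t|tx-z|^{-1}$, multiplying to $t^{kq-2}\phi_u(tx)$.

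For item (3) I would use the energy identity obtained by testing the Poisson equation against $\phi_u$ itself, namely $\|\phi_u\|_{D^{1,2}}^2=\int_{\R^3}\phi_u|u|^q\,dx$. Combining H\"older's inequality with conjugate exponents $6$ and $6/5$, the Sobolev embedding $D^{1,2}(\R^3)\hookrightarrow L^6(\R^3)$, and the continuous embedding $W^{1,q}(\R^3)\hookrightarrow L^{6q/5}(\R^3)$ (valid since $q\leq 6q/5\leq q^\ast=3q/(3-q)$ as soon as $q\geq 1/2$, which holds here because $q>3/4$), I obtain $\|\phi_u\|_{D^{1,2}}\leq C\|u\|_{L^{6q/5}}^q\leq C\|u\|^q$, as required. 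Alternatively, the same bound follows from a direct application of the Hardy--Littlewood--Sobolev inequality to the convolution defining $\phi_u$.

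Item (4) is the main difficulty. Given $u_n\rightharpoonup u$ in $W^{1,q}(\R^3)$, the sequence is bounded, so by (3) $\{\phi_{u_n}\}$ is bounded in $D^{1,2}(\R^3)$ and, up to a subsequence, $\phi_{u_n}\rightharpoonup\tilde\phi$ in $D^{1,2}$. To identify $\tilde\phi=\phi_u$ I would pass to the limit in the weak formulation $\int\nabla\phi_{u_n}\cdot\nabla\psi\,dx=\int|u_n|^q\psi\,dx$ for $\psi\in C_c^\infty(\R^3)$, using Rellich--Kondrachov to upgrade $u_n\rightharpoonup u$ to strong convergence in $L^r_{\mathrm{loc}}(\R^3)$ for all $r\in[1,q^\ast)$; this yields $|u_n|^q\to|u|^q$ in $L^1_{\mathrm{loc}}(\R^3)$, so that $-\Delta\tilde\phi=|u|^q$ and uniqueness forces $\tilde\phi=\phi_u$, after which a standard subsequence argument extends the convergence to the full sequence. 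For the integral convergence I would split
$$\phi_{u_n}|u_n|^{q-2}u_n\varphi-\phi_u|u|^{q-2}u\varphi = (\phi_{u_n}-\phi_u)|u_n|^{q-2}u_n\varphi + \phi_u\bigl(|u_n|^{q-2}u_n-|u|^{q-2}u\bigr)\varphi$$
and control each piece separately: the first via strong convergence $\phi_{u_n}\to\phi_u$ in $L^6_{\mathrm{loc}}$ paired with uniform $L^{6q/5}$ bounds on $u_n$ to handle the behavior at infinity, and the second via continuity of $s\mapsto|s|^{q-2}s$ together with dominated convergence on the support of $\varphi$. The main obstacle is precisely this tail analysis in the noncompact ambient $\R^3$; the a priori $L^6$ estimate on $\phi_{u_n}$ provided by (3), together with the $L^{6q/5}$ control from Sobolev embedding, will be the essential ingredients making the splitting argument work.
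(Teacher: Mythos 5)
The paper does not prove this lemma at all: it is imported verbatim from Du et al.\ \cite{du2} and used as a black box, so there is no internal proof to compare yours against. Judged on its own merits, your reconstruction follows the standard route and items (1)--(3) are correct: the positivity and the two scaling identities do follow directly from the Newtonian-potential representation (your change of variables $z=ty$ correctly produces the exponent $kq-2$, which incidentally also fixes the missing equality sign in the statement of item (2)), and the energy identity $\|\phi_u\|_{D^{1,2}}^2=\int_{\mathbb{R}^3}\phi_u|u|^q\,dx$ combined with H\"older ($6$, $6/5$), the Sobolev embedding $D^{1,2}\hookrightarrow L^6$, and $W^{1,q}(\mathbb{R}^3)\hookrightarrow L^{6q/5}(\mathbb{R}^3)$ gives (3). (Minor notational point: your $q^\ast=3q/(3-q)$ is the correct critical exponent for $W^{1,q}$, but it clashes with the paper's convention $q^\ast:=3p/(3-p)$.)

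Two steps in your treatment of item (4) need tightening. First, you invoke strong convergence $\phi_{u_n}\to\phi_u$ in $L^6_{\mathrm{loc}}$; Rellich--Kondrachov applied to the bounded sequence in $D^{1,2}(\mathbb{R}^3)$ only yields strong convergence in $L^r_{\mathrm{loc}}$ for $r<6$, since $6$ is the critical exponent. This is repairable because the other factor $|u_n|^{q-2}u_n\varphi$ is bounded locally in $L^{s}$ for some $s>6/5$ (using the $L^{q^\ast}$ bound on $u_n$ rather than only the $L^{6q/5}$ bound), which leaves enough room to run H\"older with a subcritical exponent on $\phi_{u_n}-\phi_u$; but as written the claim is not justified. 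Second, the test function $\varphi$ ranges over all of $W^{1,q}(\mathbb{R}^3)$ and need not have compact support, so ``dominated convergence on the support of $\varphi$'' does not cover the tail; you need an explicit approximation of $\varphi$ by compactly supported functions in the $L^{6q/5}$ (or $L^{q^\ast}$) norm, together with the uniform bound $\|\phi_{u_n}\|_{L^6}\,\|u_n\|_{L^{6q/5}}^{q-1}\le C$ to make the contribution of $\{|x|>R\}$ small uniformly in $n$. You correctly identify this tail analysis as the crux, but the proposal stops at naming the obstacle rather than resolving it; with these two points filled in, the argument is complete.
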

Substituting $\phi=\phi_{u}$ into system \eqref{e1.1}, we can
rewrite  \eqref{e1.1} as a single equation
\begin{equation}
-\Delta_{p}u  -\Delta_{q}u
+(|u|^{p-2}u+|u|^{q-2}u)-\phi_{u}|u|^{q-2}u=h(x,u)+\lambda g(x),\  \hbox{for all} \
u\in W.\label{2.1}\tag{2.1}
\end{equation}
We define the energy functional $I$ on $W$ by
\begin{align*}
I(u)=\frac{1}{p}\int_{\mathbb{R}^{3}}\bigl(|\nabla
u|^{p}+|u|^{p}\bigr)\mathrm{d}x
+\frac{1}{q}\int_{\mathbb{R}^{3}}\bigl(|\nabla
u|^{q}+|u|^{q}\bigr)\mathrm{d}x -
\frac{1}{2q}\int_{\mathbb{R}^{3}}\phi_{u}|u|^{q}\mathrm{d}x-\int_{\mathbb{R}^3}\B(
H(x,u)+\lambda \textsl{g}(x)\B)\mathrm{d}x,
\end{align*}
where $H(x,t)=\int_0^th(x,s)ds.$
 It is straightforward to show
that $I\in C^{1}(W, \mathbb{R})$ and
\begin{align*}
\langle I'(u),\psi\rangle =&\int_{\mathbb{R}^{3}}\bigl(|\nabla
u|^{p-2}\nabla u\nabla \psi+ |u|^{p-2}u\psi\bigr)\mathrm{d}x 
+
\int_{\mathbb{R}^{3}}\bigl(|\nabla u|^{q-2}\nabla u\nabla \psi+
|u|^{q-2}u\psi\bigr)\mathrm{d}x
\\
&-\int_{\mathbb{R}^{3}}\phi_{u}|u|^{q-2}u\psi \mathrm{d}x
-\int_{\mathbb{R}^3}\B( h(x,u)+\lambda
\textsl{g}(x)\B)\psi\mathrm{d}x.
\end{align*} It is
easy to verify that $(u,\phi_{u})\in W\times
D^{1,2}(\mathbb{R}^{3})$ is a solution of system \eqref{e1.1} if and
only if $u\in W$ is a critical point of $I$.

Now, we introduce the necessary fixed-point theorem due to
Carl-Heikkil\"{a} \cite{ca},  which plays a crucial role in proving
our conclusions. For this, let $\mathcal {E}$ be a real Banach
space. A nonempty subset $\mathcal {E}_{+}\neq\{0\}$ of $\mathcal
{E}$ is called an order cone if it satisfies the following conditions: ${(a)}$ $\mathcal
{E}_{+}$ is closed and convex; ${(b)}$ if $v\in \mathcal {E}_{+}$
and $\delta\geqslant0$, then $\delta v\in \mathcal {E}_{+}$; ${(c)}$
if $v\in \mathcal {E}_{+}$ and $-v\in \mathcal {E}_{+}$, then $v=0$.
An order cone $\mathcal {E}_{+}$ induces a partial order in $W$ in
the following way: $x\preceq y$ and only if $y-x\in \mathcal
{E}_{+}$, and $(W,\preceq)$ is called an ordered Banach space. If
$\inf\{x,y\}$ and $\sup\{x,y\}$ exist for all $x,y\in W$ with
respect to $\preceq$, then $(W,\Vert \cdot \Vert )$ is called a
lattice. In addition, if $\Vert x^{\pm} \Vert \leqslant \Vert x
\Vert $ for each $x\in W$, where $x^{+}:=\sup\{0,x\}$ and
$x^{-}:=-\inf\{0,x\}$, then $(W,\Vert \cdot \Vert )$ is a Banach
semilattice.  We also note that the dual space $W'$ of $W$ has the
following partial order:
\begin{align*}
\varphi_{1},\varphi_{2}\in W',\varphi_{1}\lhd
\varphi_{2}\Leftrightarrow \langle\varphi_{1},v\rangle
\leqslant\langle\varphi_{2},w\rangle, \ \text{for all} \ w\in
\mathcal {E}_{+}.
\end{align*}

Next, we give the definition of fixed point property, which,
according to Carl-Heikkil\"{a} \cite{ca}, is the following one: $P$
is said to have a fixed point property if each increasing mapping
$G:P\to P$ has a fixed point.

\begin{proposition}\label{pro2.1} (Carl-Heikkil\"{a} \cite[Corollary 2.2]{ca})
Let $W$ be a reflexive Banach semilattice. Then every closed ball in
$W$ has the fixed point property.
\end{proposition}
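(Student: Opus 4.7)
The plan is to apply Proposition~\ref{pro2.1} to a nonlinear operator $T$ defined on a closed ball in the ordered space $W$. First, I would equip $W$ with the natural pointwise order; the cone $\mathcal{E}_+ := \{u \in W : u(x) \geq 0 \text{ a.e. in } \mathbb{R}^3\}$ makes $W$ into a reflexive Banach semilattice, since reflexivity is recorded in Section~\ref{sec preliminaries} and the semilattice property follows from $\|u^\pm\|_{W^{1,r}(\mathbb{R}^3)} \leq \|u\|_{W^{1,r}(\mathbb{R}^3)}$ for $r \in \{p,q\}$. Proposition~\ref{pro2.1} then guarantees that every closed ball $B_R := \{u \in \mathcal{E}_+ : \|u\| \leq R\}$ has the fixed point property.

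Next, for $v \in B_R$ I would define $T(v) := u$, where $u$ is the unique weak solution of the auxiliary monotone equation
\begin{equation*}
-\Delta_p u - \Delta_q u + |u|^{p-2}u + |u|^{q-2}u = \phi_v |v|^{q-2}v + h(x,v) + \lambda g(x) \quad \text{in } \mathbb{R}^3.
\end{equation*}
The left-hand side defines a strictly monotone, coercive, continuous operator from $W$ into $W'$. The right-hand side belongs to $W'$ by combining Lemma~\ref{lemma2.1}(3) for the nonlocal term, the growth condition \eqref{e1.4} together with the Sobolev embeddings $W \hookrightarrow L^\tau \cap L^{q^\ast}$ for $h(x,v)$, and H\"older's inequality together with $g \in L^{3q/(4q-3)}$ and $W \hookrightarrow L^{3q/(3-q)}$ for $\lambda g$. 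The Minty--Browder theorem then yields existence and uniqueness of $u$. Since all three terms on the right-hand side are nonnegative (using Lemma~\ref{lemma2.1}(1), $v \geq 0$, $h(x,\cdot) \geq 0$ and $g \geq 0$), testing the equation with $u^-$ forces $u \in \mathcal{E}_+$, so $T$ maps $B_R$ into $\mathcal{E}_+$.

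Monotonicity of $T$ would follow from a weak comparison argument: if $v_1 \preceq v_2$ in $\mathcal{E}_+$, then $\phi_{v_1}|v_1|^{q-2}v_1 \leq \phi_{v_2}|v_2|^{q-2}v_2$ pointwise (since $\phi_v$ depends monotonically on $|v|^q$) and $h(x,v_1) \leq h(x,v_2)$ by hypothesis; subtracting the two auxiliary equations and testing with $(T(v_1) - T(v_2))^+$ forces this function to vanish, whence $T(v_1) \preceq T(v_2)$. To verify $T(B_R) \subset B_R$, I would test the auxiliary equation with $u = T(v)$: the left-hand side controls a suitable power of $\|u\|$ from below, while the right-hand side, after invoking Lemma~\ref{lemma2.1}(3), \eqref{e1.4}, the Sobolev embeddings, and H\"older's inequality, is bounded by a sum of terms with superlinear $R$-powers $2q,\ \tau,\ q^\ast$ (all exceeding $q > p$) together with a linear perturbation in $\lambda$. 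First choosing $R > 0$ small enough to control the superlinear contributions, and then taking $\lambda_0 > 0$ sufficiently small, yields $\|u\| \leq R$.

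Finally, Proposition~\ref{pro2.1} produces a fixed point $u^\ast \in B_R$ of $T$, which by construction is a nonnegative weak solution of \eqref{e1.1}; nontriviality $u^\ast \not\equiv 0$ is forced by the driving term $\lambda g \not\equiv 0$, and strict positivity follows from a strong maximum principle for the $(p,q)$-Laplacian. I expect the principal obstacle to be the quantitative invariance $T(B_R) \subset B_R$: the exponents $p, q, \tau, q^\ast, 2q$ must be balanced so that compatible choices of $R$ and $\lambda_0$ exist simultaneously, which is precisely where the hypothesis $3/4 < p < q < 3$ enters, ensuring subcritical Sobolev embeddings and the integrability pairing between $g$ and $W$. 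A secondary subtlety is the comparison step for the nonlocal term, where the pointwise monotonicity $v \mapsto \phi_v$ on the cone is essential.
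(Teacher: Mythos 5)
There is a fundamental mismatch: you have not attempted to prove the statement at all. Proposition~\ref{pro2.1} is an \emph{abstract} fixed point theorem about reflexive Banach semilattices --- the claim that every increasing self-map of a closed ball has a fixed point --- and the paper does not prove it either; it is quoted verbatim from Carl--Heikkil\"{a} \cite[Corollary 2.2]{ca} and used as a black box. Your proposal instead \emph{applies} the proposition (your very first sentence is ``The plan is to apply Proposition~\ref{pro2.1} to a nonlinear operator $T$\dots''), which is circular as a proof of the proposition and is in substance a sketch of the proof of Theorem~\ref{T1.1}, not of Proposition~\ref{pro2.1}. An actual proof of the proposition would contain no PDE content whatsoever: it would need to show that a closed ball of a reflexive Banach semilattice, equipped with the order induced by the cone, is chain-complete (every well-ordered chain has a supremum in the ball), which one gets from weak compactness of the closed ball (reflexivity), weak closedness of the order cone (it is closed and convex), and the norm bound $\|x^{\pm}\|\leqslant\|x\|$; one then invokes an abstract fixed point principle for increasing maps on chain-complete posets (Bourbaki--Kneser/Amann type). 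None of these ingredients appears in your write-up.

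As a secondary remark, even read as a proof of Theorem~\ref{T1.1}, your sketch diverges from the paper's route in one respect worth noting: the paper does not prove a comparison principle for the solution map $v\mapsto T(v)$ by testing with $(T(v_1)-T(v_2))^{+}$; instead it factors $\mathcal{G}=\mathcal{B}^{-1}\circ\mathcal{T}$, shows $\mathcal{T}:(W,\preceq)\to(W',\lhd)$ is increasing directly from the monotonicity of $h$ and of the nonlocal term (inequality \eqref{e3.9}), and quotes that $\mathcal{B}^{-1}$ is increasing (as in \cite{t1}). Your comparison argument is a plausible alternative for the second half, but it does not rescue the proposal as a proof of the statement actually asked.
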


\section{Proof of Theorem \ref{T1.1}}
In order to prove Theorem \ref{T1.1}, we first prove some key
lemmas. To begin, we define the functional $\mathcal {B}:
W\rightarrow W'$ by $ \langle \mathcal {B}u,v\rangle=
\int_{\mathbb{R}^{3}}(|\nabla u|^{p-2}\nabla u\nabla v +
|u|^{p-2}uv)\mathrm{d}x + \int_{\mathbb{R}^{3}}(|\nabla
u|^{q-2}\nabla u\nabla v + |u|^{q-2}uv)\mathrm{d}x.$ Clearly,
$\mathcal {B}u$ is linear for all $u\in W$. This means that the
H\"{o}lder inequality holds
\begin{equation*}
|\langle \mathcal {B}u,v\rangle|\leqslant C_1\Vert u \Vert^{p-1}
\Vert v \Vert + C_2\Vert u \Vert^{q-1} \Vert v \Vert, \ \text{ for\
some }\ C_1, C_2 >0.
\end{equation*}
Therefore $\mathcal {B}u\in W'$ and $\mathcal {B}$ are
well-defined. In addition, we have the following property of
$\mathcal {B}$.
\begin{lemma}\label{lem3.1} The operator $\mathcal {B}: W\rightarrow W'$
is continuous and invertible.\end{lemma}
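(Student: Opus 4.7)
The plan is to treat $\mathcal{B}$ as the sum of the duality mappings associated with $W^{1,p}(\mathbb{R}^3)$ and $W^{1,q}(\mathbb{R}^3)$, and to show (i) continuity by a Nemytskii-operator argument and (ii) invertibility via the Browder--Minty theorem, which requires verifying that $\mathcal{B}$ is strictly monotone and coercive on $W$. Because the target space is the dual of an intersection, every estimate must be dualized against a generic test function $v\in W$ of unit norm.

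\textbf{Continuity.} Suppose $u_n\to u$ in $W$. Then $u_n\to u$ in $L^p(\mathbb{R}^3)$ and $L^q(\mathbb{R}^3)$, and $\nabla u_n\to\nabla u$ in $L^p(\mathbb{R}^3;\mathbb{R}^3)$ and $L^q(\mathbb{R}^3;\mathbb{R}^3)$. For any $v\in W$ with $\Vert v\Vert\leq 1$, I would apply H\"older's inequality to the four pieces of $\langle \mathcal{B}u_n-\mathcal{B}u,v\rangle$, for example
\[
\Bigl|\int_{\mathbb{R}^3}\bigl(|\nabla u_n|^{p-2}\nabla u_n-|\nabla u|^{p-2}\nabla u\bigr)\nabla v\,dx\Bigr|
\leq \bigl\Vert |\nabla u_n|^{p-2}\nabla u_n-|\nabla u|^{p-2}\nabla u\bigr\Vert_{L^{p'}}\Vert v\Vert,
\]
and then invoke the classical continuity of the Nemytskii map $w\mapsto |w|^{r-2}w$ from $L^r$ to $L^{r'}$ for $r\in\{p,q\}$. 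Taking the supremum over $v$ yields $\Vert\mathcal{B}u_n-\mathcal{B}u\Vert_{W'}\to 0$.

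\textbf{Monotonicity and coercivity.} For strict monotonicity I would use the elementary inequality
$(|a|^{r-2}a-|b|^{r-2}b)\cdot(a-b)\geq 0$, with strict inequality when $a\neq b$, applied to both $r=p$ and $r=q$ and both to the gradient and the function pieces. Summing the four nonnegative contributions gives $\langle \mathcal{B}u-\mathcal{B}v,u-v\rangle>0$ for $u\neq v$, so $\mathcal{B}$ is strictly monotone and hence injective. For coercivity, a direct computation yields
\[
\langle \mathcal{B}u,u\rangle=\Vert u\Vert_{W^{1,p}}^p+\Vert u\Vert_{W^{1,q}}^q,
\]
so writing $a=\Vert u\Vert_{W^{1,p}}$ and $b=\Vert u\Vert_{W^{1,q}}$, I would observe that if $a+b=\Vert u\Vert\to\infty$ then the larger of $a,b$ (say $a$) satisfies $a\geq(a+b)/2$, and hence
\[
\frac{\langle \mathcal{B}u,u\rangle}{\Vert u\Vert}\geq\frac{a^p}{a+b}\geq\frac{a^{p-1}}{2}\to\infty,
\]
with the symmetric estimate if $b\geq a$, using $p,q>1$.

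\textbf{Conclusion and main obstacle.} With continuity, strict monotonicity and coercivity in hand, the Browder--Minty theorem (applied on the reflexive separable space $W$) produces a unique $u\in W$ solving $\mathcal{B}u=f$ for every $f\in W'$, so $\mathcal{B}$ is bijective. Continuity of $\mathcal{B}^{-1}$ can then be derived from the fact that $\mathcal{B}$ is of type $(S_+)$: if $f_n\to f$ in $W'$ and $u_n=\mathcal{B}^{-1}f_n$, coercivity forces $\{u_n\}$ to be bounded, so a subsequence converges weakly to some $\bar u$; then $\limsup\langle \mathcal{B}u_n,u_n-\bar u\rangle\leq 0$ together with the $(S_+)$ property forces $u_n\to\bar u$ strongly, and passing to the limit identifies $\bar u=\mathcal{B}^{-1}f$. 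The step I expect to be most delicate is the coercivity estimate, because the norm on $W$ is a \emph{sum} of the two Sobolev norms while $\langle \mathcal{B}u,u\rangle$ is a sum of a $p$-power and a $q$-power; one cannot simply factor either $\Vert u\Vert_{W^{1,p}}$ or $\Vert u\Vert_{W^{1,q}}$ out, so the $\max$-dichotomy above is the natural device to avoid losing control when one norm dominates the other.
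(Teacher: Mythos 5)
Your proposal is correct and follows the same overall skeleton as the paper: establish continuity, strict monotonicity and coercivity of $\mathcal{B}$, then invoke the Minty--Browder theorem. The differences are in the local arguments, and in each case your version is the more robust one. For continuity, the paper claims the direct bound $\Vert\mathcal{B}u_{k}-\mathcal{B}u\Vert_{W'}\leq\Vert u_{k}-u\Vert_{W^{1,p}}^{p}+\Vert u_{k}-u\Vert_{W^{1,q}}^{q}$ "by H\"older"; this inequality does not actually follow from H\"older applied to the nonlinear terms $|\nabla u_{k}|^{p-2}\nabla u_{k}-|\nabla u|^{p-2}\nabla u$, whereas your Nemytskii-operator argument (continuity of $w\mapsto|w|^{r-2}w$ from $L^{r}$ to $L^{r'}$) is the standard correct route to the same conclusion. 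For monotonicity, the paper uses the quantitative inequality $(|a|^{s-2}a-|b|^{s-2}b)(a-b)\geq C|a-b|^{s}$, which requires $s\geq2$ and sits awkwardly with the standing hypothesis $\frac{3}{4}<p<q<3$ (so $p$ may well be below $2$); your purely qualitative version, nonnegativity with strict inequality when $a\neq b$, gives strict monotonicity for all exponents greater than $1$ and so covers the full parameter range. Your coercivity dichotomy ($\max$ of the two norms dominates half of $\Vert u\Vert$) is essentially what the paper needs but does not spell out, and your additional $(S_{+})$ argument for continuity of $\mathcal{B}^{-1}$ goes beyond the paper, which only establishes bijectivity; that extra step is not needed for the lemma as stated but does no harm.
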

\begin{proof} Let $\{u_{k}\}$ in $W$ be such that $u_{k}\rightarrow u$ in $W$.
Using the  H\"{o}lder  inequality, for $v\in W$ with $\Vert v \Vert
\leqslant 1$, we have
\begin{equation*}
\left\|\mathcal {B}u_{k}-\mathcal {B}u\right\|_{W'}=\sup_{v\in
W,\left\|v\right\|\leqslant1}|\langle \mathcal {B}u_{k}-\mathcal
{B}u,v\rangle|\leqslant \|u_{k}-u\|_{W^{1,p}(\mathbb{R}^3)}^p
+ \|u_{k}-u\|_{W^{1,q}(\mathbb{R}^3)}^q \rightarrow 0.
\end{equation*}
This means that the operator $\mathcal {B}$ is continuous.
Considering $p, q\geqslant2$ and $\langle \mathcal {B}u,u\rangle=
\left\|u\right\|^{p} + \left\|u\right\|^{q}$ for all $u\in W$, we
have $ \lim_{\left\|u\right\|\rightarrow\infty}\frac{\langle
\mathcal {B}u,u\rangle}{\left\|u\right\|}=\infty$. It well-known
that
\begin{align*}
(|a |^{s-2}a-|b|^{s-2}b|)(a-b)\geqslant C_{p}|a-b|^{s-2},\
\text{for all}\ s \geq 2, a,b\in\mathbb{R},
\end{align*}
and we have $\langle \mathcal {B}u_{1}-\mathcal {B}u_{2},
u_{1}-u_{2}\rangle>0$, for all $u_{1}, u_{2}\in W,\ u_{1}\neq
u_{2}$.  Therefore, by the Minty-Browder Theorem (see
\cite[Theorem 5.16]{br}), we obtain that the operator $W$ is
reversible. Hence the proof of Lemma \ref{lem3.1} is complete.
\end{proof}

Similar to the proof of Lemma 3.2 in \cite{t1}, we can show that
$\mathcal {B}^{-1}:(W',\lhd )\rightarrow (W,\preceq)$ is increasing.

Next,  inspired by \cite{t3}, let the operator $\mathcal {T} :
W\rightarrow W'$ be defined by
\begin{align*}
\langle \mathcal {T}u,v\rangle
=\int_{\mathbb{R}^{3}}\B(\phi_{u^+}|u^+|^{q-2}u^+
+ h(x,u^+)+\lambda g(x)\B) v\mathrm{d}x, \ \text{for all}\  u,
v\in W,
\end{align*}
where $u^+:= \max\{u, 0\}$ and $u^-:= -\min\{u, 0\}.$
 By the
H\"{o}lder inequality, the Sobolev Embedding Theorem, and the
Hardy-Littlewood-Sobolev inequality, there exist some positive
constants $C^\ast$, $C^{\ast\ast}$ and $C^{\ast\ast\ast}$ such that
\begin{align}\label{e3.1}
\left|\langle \mathcal {T}u,v\rangle\right| \leq
\left(C^\ast\|
u^+
\|^{2q-1} +
C^{\ast\ast}\|d_1\|_\eta\|u^+\|^{\tau-1}  +
C^{\ast\ast\ast}\|d_2\|_\infty\|u^+\|^{q^\ast-1} +
\lambda\|g\|_{\frac{3q}{4q-3}}\right)\|v\|.
\end{align}

Let  $\mathcal {G} :=\mathcal {B}^{-1}\circ \mathcal {T}$. Then we
have the following result.
\begin{lemma}\label{lem3.5} Under the hypotheses of Theorem
1.1, for any $0<\lambda\leqslant\lambda_{0}$, there exists $R>0$,
such that $\mathcal {G}(\mathbb{B}_{W}[0,R])\subset
\mathbb{B}_{W}[0,R]$,  where $\mathbb{B}_{W}[0,R]=\{u\in W :
\|u\|\leqslant R\}$.
\end{lemma}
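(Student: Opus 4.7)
The plan is to choose $R>0$ and $\lambda_{0}>0$ so that, for every $u\in\mathbb{B}_{W}[0,R]$ and every $\lambda\in(0,\lambda_{0}]$, the image $w:=\mathcal{G}(u)=\mathcal{B}^{-1}(\mathcal{T}u)$ satisfies $\|w\|\le R$. Starting from the identity $\mathcal{B}w=\mathcal{T}u$ in $W'$ and testing against $v=w$ yields, by the computation from the proof of Lemma~\ref{lem3.1},
\begin{equation*}
\|w\|_{W^{1,p}(\mathbb{R}^{3})}^{p}+\|w\|_{W^{1,q}(\mathbb{R}^{3})}^{q}=\langle\mathcal{B}w,w\rangle=\langle\mathcal{T}u,w\rangle.
\end{equation*}
On the right-hand side I would apply \eqref{e3.1} together with the semilattice bound $\|u^{+}\|\le\|u\|\le R$, which yields $\langle\mathcal{T}u,w\rangle\le\alpha(R,\lambda)\|w\|$ with
\begin{equation*}
\alpha(R,\lambda):=C^{\ast}R^{2q-1}+C^{\ast\ast}\|d_{1}\|_{\eta}R^{\tau-1}+C^{\ast\ast\ast}\|d_{2}\|_{\infty}R^{q^{\ast}-1}+\lambda\|g\|_{\frac{3q}{4q-3}}.
\end{equation*}

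For the left-hand side I would exploit the elementary fact that, since $\|w\|=\|w\|_{W^{1,p}(\mathbb{R}^{3})}+\|w\|_{W^{1,q}(\mathbb{R}^{3})}$, at least one of the two summands is $\ge\|w\|/2$; combining this with the trivial drop $\|w\|_{W^{1,p}}^{p}+\|w\|_{W^{1,q}}^{q}\ge\max\{\|w\|_{W^{1,p}}^{p},\|w\|_{W^{1,q}}^{q}\}$ and the ordering $p<q$ yields
\begin{equation*}
\|w\|_{W^{1,p}(\mathbb{R}^{3})}^{p}+\|w\|_{W^{1,q}(\mathbb{R}^{3})}^{q}\ge 2^{-q}\min\{\|w\|^{p},\|w\|^{q}\}.
\end{equation*}
Anticipating the choice $R\in(0,1)$, in the relevant regime $\|w\|<1$ the minimum equals $\|w\|^{q}$, so combining the two bounds reduces the task to verifying $\|w\|^{q-1}\le 2^{q}\alpha(R,\lambda)$. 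Moreover, the condition $2^{q}\alpha(R,\lambda)\le R^{q-1}<1$ that I will secure rules out the complementary regime \emph{a posteriori}, since $\|w\|\ge 1$ would force $1\le\|w\|^{p-1}\le 2^{q}\alpha\le R^{q-1}<1$, a contradiction.

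It therefore suffices to pick $R\in(0,1)$ and $\lambda_{0}>0$ so that $2^{q}\alpha(R,\lambda)\le R^{q-1}$ for every $0<\lambda\le\lambda_{0}$. Dividing by $R^{q-1}$, this reads
\begin{equation*}
2^{q}\B(C^{\ast}R^{q}+C^{\ast\ast}\|d_{1}\|_{\eta}R^{\tau-q}+C^{\ast\ast\ast}\|d_{2}\|_{\infty}R^{q^{\ast}-q}+\lambda\|g\|_{\frac{3q}{4q-3}}R^{1-q}\B)\le 1.
\end{equation*}
The main obstacle I expect is the structural tension in this last inequality: the three polynomial terms carry nonnegative exponents $q>0$, $\tau-q\ge 0$ and $q^{\ast}-q>0$ and so stay bounded (and, for the first and third, shrink) as $R\to 0^{+}$, whereas the perturbation term $\lambda\|g\|R^{1-q}$ diverges as $R\to 0^{+}$ because $1-q<0$. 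Balancing these two effects, I would first fix $R\in(0,1)$ small enough that the three polynomial contributions together are $\le 1/2$, and then set $\lambda_{0}:=R^{q-1}/(2^{q+1}\|g\|_{\frac{3q}{4q-3}})$ so that the $\lambda$-contribution is also $\le 1/2$ for all $0<\lambda\le\lambda_{0}$. This delivers $\mathcal{G}(\mathbb{B}_{W}[0,R])\subset\mathbb{B}_{W}[0,R]$, completing the proof.
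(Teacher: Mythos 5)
Your proposal is correct and follows essentially the same route as the paper: test $\mathcal{B}w=\mathcal{T}u$ with $w$, bound $\langle\mathcal{T}u,w\rangle$ via \eqref{e3.1}, bound $\|w\|_{W^{1,p}}^{p}+\|w\|_{W^{1,q}}^{q}$ from below by a power of $\|w\|$, and then choose $R$ small first and $\lambda_{0}$ proportional to $R^{q-1}/\|g\|_{\frac{3q}{4q-3}}$ afterwards. The only (harmless) difference is cosmetic: the paper runs a three-case analysis yielding ``$\langle\mathcal{B}w,w\rangle\geq 2^{-p}\|w\|^{p}$ or $\geq 2^{1-q}\|w\|^{q}$'' and imposes two smallness conditions, whereas you compress the lower bound into $2^{-q}\min\{\|w\|^{p},\|w\|^{q}\}$ and eliminate the $\|w\|\geq 1$ regime a posteriori, so only the $q$-condition is needed.
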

\begin{proof}
Let $u\in W$, $v=(\mathcal {B}^{-1}\circ \mathcal {T})u=\mathcal
{G}u$. We note that  $\langle \mathcal {B}v,
v\rangle=\|v\|_{W^{1,p}(\mathbb{R}^3)}^{p} +
\|v\|_{W^{1,q}(\mathbb{R}^3)}^{q}$. We consider 3 possible
cases:

\noindent {\it Case 1}. $\|v\|_{W^{1,q}(\mathbb{R}^3)}\geq 1.$ Then
$\|v\|_{W^{1,q}(\mathbb{R}^3)}^{q}\geq\|v\|_{W^{1,q}(\mathbb{R}^3)}^{p},$
hence
\begin{equation}\label{e3.2}
\langle \mathcal {B}v, v\rangle \geqslant
\|v\|_{W^{1,p}(\mathbb{R}^3)}^{p}+\|v\|_{W^{1,q}(\mathbb{R}^3)}^{p}
\geqslant
2^{1-p}(\|v\|_{W^{1,q}(\mathbb{R}^3)}+\|v\|_{W^{1,p}(\mathbb{R}^3)})^{p}
= 2^{1-p}\|v\|^{p}.
\end{equation}
\noindent {\it Case 2.} $\|v\|_{W^{1,q}(\mathbb{R}^3)}< 1$ and
$\|v\|_{W^{1,p}(\mathbb{R}^3)}\geq 1.$ Then
$\|v\|_{W^{1,p}(\mathbb{R}^3)}\geq 1 >
\|v\|_{W^{1,q}(\mathbb{R}^3)}$. Since
$\|v\|=\|v\|_{W^{1,q}(\mathbb{R}^3)}+\|v\|_{W^{1,p}(\mathbb{R}^3)}$,
we  get $2\|v\|_{W^{1,q}(\mathbb{R}^3)} \leq \|v\| \leq
2\|v\|_{W^{1,p}(\mathbb{R}^3)},$ therefore
\begin{align}\label{e3.3}
\langle \mathcal {B}v,
v\rangle=\|v\|_{W^{1,p}(\mathbb{R}^3)}^{p}+\|v\|_{W^{1,q}(\mathbb{R}^3)}^{q}\geqslant\|v\|_{W^{1,p}(\mathbb{R}^3)}^{p}
\geqslant\frac{1}{2^{p}}\|v\|^{p}.
\end{align}
\noindent {\it Case 3.} $\|v\|_{W^{1,q}(\mathbb{R}^3)}< 1$ and
$\|v\|_{W^{1,p}(\mathbb{R}^3)}< 1.$ Then
$\|v\|_{W^{1,p}(\mathbb{R}^3)}^{q}\leqslant\|v\|_{W^{1,p}(\mathbb{R}^3)}^{p}$,
therefore
\begin{align}\label{e3.4}
\langle \mathcal {B}v, v\rangle
\geqslant\|v\|_{W^{1,p}(\mathbb{R}^3)}^{q}+\|v\|_{W^{1,q}(\mathbb{R}^3)}^{q}
\geqslant2^{1-q}(\|v\|_{W^{1,q}(\mathbb{R}^3)}+\|v\|_{W^{1,p}(\mathbb{R}^3)})^{q}
 =2^{1-q}\|v\|^{q}.
\end{align}
From \eqref{e3.2}, \eqref{e3.3} and \eqref{e3.4}, we have
\begin{align}\label{e3.5}
\langle \mathcal {B}v, v\rangle  \geqslant 2^{-p}\|v\|^p
\quad\mbox{or}\quad \langle \mathcal {B}v, v\rangle  \geqslant
2^{1-q}\|v\|^q.
\end{align}
On the other hand, we have
\begin{equation}\label{e3.6}
\|\mathcal {G}u\|_{W^{1,p}(\mathbb{R}^3)}^{p} + \|\mathcal
{G}u\|_{W^{1,q}(\mathbb{R}^3)}^{q}  = \langle \mathcal {T}u,
\mathcal {G}u\rangle \leq \|\mathcal {T}u\|_{W'}\|\mathcal {G}u\|.
\end{equation}
If $\|u\|\leqslant R,$ then by   \eqref{e3.1}, \eqref{e3.5} and
\eqref{e3.6},  one has
\begin{align*}
2^{-p}\|\mathcal {G}u\|^{p-1} &\leqslant\|\mathcal
{T}u\|_{W'}\leqslant C^\ast\|u\|^{2q-1} +
C^{\ast\ast}\|d_1\|_\eta\|u\|^{\tau-1}  +
C^{\ast\ast\ast}\|d_2\|_\infty\|u\|^{q^\ast-1} +
\lambda\|g\|_{\frac{3q}{4q-3}}\\
&\leqslant C^\ast R^{2q-1} + C^{\ast\ast}\|d_1\|_\eta R^{\tau-1} +
C^{\ast\ast\ast}\|d_2\|_\infty R^{q^\ast-1} +
\lambda\|g\|_{\frac{3q}{4q-3}}
\end{align*}
and
\begin{align*}
2^{1-q}\|\mathcal {G}u\|^{q-1} \leqslant C^\ast R^{2q-1} +
C^{\ast\ast}\|d_1\|_\eta R^{\tau-1} + C^{\ast\ast\ast}\|d_2\|_\infty
R^{q^\ast-1} + \lambda\|g\|_{\frac{3q}{4q-3}}.
\end{align*}
From this, we obtain
\begin{align}\label{e3.7}
\frac{\|\mathcal {G}u\|^{p-1}}{R^{p-1}}\leqslant 2^{p}C^\ast
R^{2q-p}+2^{p}C^{\ast\ast}\|d_1\|_\eta
R^{\tau-p}+2^{p}C^{\ast\ast\ast}\|d_2\|_\infty\|d_2\|_{\infty}R^{q^\ast-p}
+2^{p}\lambda\frac{\|g\|_{\frac{3q}{4q-3}}}{R^{p-1}}.
\end{align}
Similarly, we can also get
\begin{align}\label{e3.8}
\frac{\|\mathcal {G}u\|^{q-1}}{R^{q-1}}\leqslant 2^{q-1}C^\ast
R^{q}+2^{q-1}C^{\ast\ast}\|d_1\|_\eta
R^{\tau-q}+2^{q-1}C^{\ast\ast\ast}\|d_2\|_{\infty}R^{q^\ast-q}
+2^{q-1}\lambda \frac{\|g\|_{\frac{3q}{4q-3}}}{R^{q-1}}.
\end{align}
We now take $R>0$ sufficiently small so that
\begin{align*}
2^{p}C^\ast R^{2q-p}+2^{p}C^{\ast\ast}\|d_1\|_\eta
R^{\tau-p}+2^{p}C^{\ast\ast\ast}\|d_2\|_\infty\|d_2\|_{\infty}R^{q^\ast-p}\leqslant\frac{1}{2}
\end{align*}
and
\begin{align*}
2^{q-1}C^\ast R^{q}+2^{q-1}C^{\ast\ast}\|d_1\|_\eta
R^{\tau-q}+2^{q-1}C^{\ast\ast\ast}\|d_2\|_{\infty}R^{q^\ast-q}<\frac{1}{2}.
\end{align*}
Let
\begin{align*}
\lambda_{0} :=
\min\left\{\frac{R^{p-1}}{2^{p+1}\|g\|_{\frac{3q}{4q-3}}},\
\frac{R^{q-1}}{2^{q}\|g\|_{\frac{3q}{4q-3}}} \right\}.
\end{align*}
Then for all $0<\lambda\leqslant\lambda_{0}$, we can derive from
\eqref{e3.7} and \eqref{e3.8} that  $\|\mathcal {G} u\| \leqslant
R$. This completes the proof of Lemma \ref{lem3.5}.\end{proof}

\begin{proof}[\bf Proof of Theorem  \ref{T1.1}]
It suffices to  show that $\mathcal {G}
:(W,\preceq)\rightarrow(W,\preceq)$ is an increasing operator, since
by Proposition \ref{pro2.1} and Lemma \ref{lem3.5}, we can
then obtain the existence of the weak solutions.  So let us show
that the operator $\mathcal {T}: (W, \preceq)\rightarrow (W', \lhd)$
is increasing. In fact, take $u_{1}, u_{2}\in W$ such that
$u_{1}\leqslant u_{2}$ almost everywhere on $\mathbb{R}^3$. Due to
the assumptions on $h$ in Theorem \ref{T1.1} and the definition of
operator $\mathcal {T}$, we get
\begin{align}\label{e3.9}
\langle \mathcal {T}u_{1},v\rangle
&=\int_{\mathbb{R}^{3}}\B(\phi_{u_1^+}|u_1^+|^{q-2}u_1^+ 
+h(x,u_1^+)
+\lambda
g(x)\B) v\mathrm{d}x\nonumber\\&\leqslant
\int_{\mathbb{R}^{3}}\B(\phi_{u_2^+}
| u_2^+|^{q-2}u_2^+
+h(x,u_2^+)
+\lambda
g(x)\B) v\mathrm{d}x=\langle \mathcal {T}u_{2},v\rangle,\ \text{for
all}\ v\in \mathcal {E}_{+}.
\end{align}
Therefore, the operator $\mathcal
{G}:(W,\preceq)\rightarrow(W,\preceq)$ is indeed increasing. By
Proposition \ref{pro2.1}  and Lemma \ref{lem3.5}, the operator
$\mathcal {G}$ has a fixed point, that is, there exists $u_{0}\in
B_{W}[0,R]$ such that $\mathcal {G}u_{0}=u_{0}$. Since
$\mathcal {G}=\mathcal {B}^{-1}\circ \mathcal T,$ we have $ \langle
\mathcal {B}u_{0}, v\rangle=\langle \mathcal {T}u_{0} , v\rangle$,
for all $v\in W$. That is,
\begin{align}\label{e3.10}
&\nonumber \int_{\mathbb{R}^{3}}(|\nabla u_0|^{p-2}\nabla u_0\nabla
v + |u_0|^{p-2}u_0v)\mathrm{d}x + \int_{\mathbb{R}^{3}}(|\nabla
u_0|^{q-2}\nabla u_0\nabla v +
|u_0|^{q-2}u_0v)\mathrm{d}x \\
& = \int_{\mathbb{R}^{3}}\phi_{u_0^+}|u_0^+|^{q-2}u_0^+v \mathrm{d}x
-\int_{\mathbb{R}^3}\B( h(x,u_0^+)+\lambda
g(x)\B)v\mathrm{d}x.
\end{align}
Letting $v = u_0^-$ in \eqref{e3.10}, we can get
$\|u_0^-\|_{W^{1,p}(\mathbb{R}^3)}^{p} +
\|u_0^-\|_{W^{1,q}(\mathbb{R}^3)}^{q} = 0$, which means that $u_0^- =
 0$ and is a nontrivial nonnegative weak
solution of problem \eqref{e1.1}. According to the well-known Strong
Maximum Principle, $u_0$ is a positive solution to problem
\eqref{e1.1}.
This completes the proof of Theorem \ref{T1.1}.
\end{proof}

\subsection*{Acknowledgments}
Song was supported by the National Natural Science Foundation of
China (No.12001061), the Research Foundation of Department of
Education of Jilin Province (JJKH20220822KJ) and the Natural Science
Foundation of Jilin Province (Grant No. YDZJ202201ZYTS582,
222614JC010793935). Repov\v{s} was supported by the Slovenian
Research agency grants P1-0292,
J1-4031,
 J1-4001,
 N1-0278,
 N1-0114,
  and N1-0083.
  We gratefully acknowledge the reviewers for their valuable comments and suggestions.

\end{document}